\newtheorem{thm}{Theorem}[section]
\newtheorem{prop}[thm]{Proposition}
\newtheorem{conj}[thm]{Conjecture}
\newtheorem{coro}[thm]{Corollary}
\theoremstyle{definition}
\newtheorem{de}[thm]{Definition}
\newtheorem{exm}[thm]{Example}
\theoremstyle{remark}
\newtheorem{rem}[thm]{Remark}
\def \R{\mathbb{R}}
\def \A{\mathfrak{A}}
\def \B{\mathfrak{B}}
\def \S{\mathfrak{S}}
\def \F{\mathfrak{F}}
\def \G{\mathfrak{G}}
\newcommand{\cb}{{\mathscr B}}
\newcommand{\cc}{{\mathscr C}}
\newcommand{\La}{\Lambda}
\newcommand{\cp}{\mathcal{P}}
\def\on{\overline{n}}
\newcommand{\lrf}[1]{\left\lfloor #1\right\rfloor}
\def \dar{{\rm dar}\,}
\title{Polynomials with palindromic and unimodal coefficients
\thanks{Partially supported by the National Natural Science Foundation of China (Grant Nos. 11071030, 11371078)
and the Specialized Research Fund for the Doctoral Program of Higher Education of China (Grant No. 20110041110039).}}
\author{Hua Sun,\quad Yi Wang
\thanks{Corresponding Author.
\newline\hspace*{5mm}
{\it Email address:}\quad wangyi@dlut.edu.cn (Y. Wang)}
,\quad Hai-Xia Zhang}
\date{\footnotesize School of Mathematical Sciences,
         Dalian University of Technology,
         Dalian 116024, PR China}
\begin{document}
\maketitle
\begin{abstract}
Let $f(q)=a_rq^r+\cdots+a_sq^s$, with $a_r\neq 0$ and $a_s\neq 0$,
be a real polynomial. It is a palindromic polynomial of darga $n$ if
$r+s=n$ and $a_{r+i}=a_{s-i}$ for all $i$. Polynomials of darga $n$
forms a linear subspace $\mathcal{P}_n(q)$ of $\mathbb{R}(q)_{n+1}$
of dimension $\lfloor{n/2}\rfloor+1$. We give transition matrices
between two bases $\left\{q^j(1+q+\cdots+q^{n-2j})\right\},
\left\{q^j(1+q)^{n-2j}\right\}$ and the standard basis
$\left\{q^j(1+q^{n-2j})\right\}$ of $\mathcal{P}_n(q)$. We present
some characterizations and sufficient conditions for palindromic
polynomials that can be expressed in terms of these two bases with
nonnegative coefficients.
We also point out the link between such polynomials and rank-generating functions of posets.
\\
{\sl MSC:}\quad 05A20; 05A15; 15A03; 06A07
\\
{\sl Keywords:}\quad Unimodal sequence; Palindromic sequence; Linear space; Poset
\end{abstract}

\section{Introduction}
\hspace*{\parindent}

Let $a_0,a_1,\ldots,a_n$ be a sequence of real numbers.
It is {\it palindromic} (or {\it symmetric}) with center of symmetry at $n/2$ if $a_i=a_{n-i}$ for all $i$.
It is {\it unimodal} if $a_0\le\cdots\le a_{m-1}\le a_m\ge a_{m+1}\ge\cdots\ge a_n$ for some $m$.

Let $f(q)=a_rq^r+\cdots+a_sq^s$, with $a_r\neq 0$ and $a_s\neq 0$, be a real polynomial.
Following Zeilberger~\cite{Zei89a},
the {\it darga} of $f(q)$, denoted by $\dar f$, is defined to be $r+s$, i.e., the sum of its lowest and highest powers.
For example, $\dar (1+q)=1$ and $\dar (q)=2$.
We say that the polynomial $f(q)$ is {\it palindromic} ({\it unimodal}, resp.)
if the sequence $a_r,a_{r+1},\ldots,a_{s-1},a_s$ has the corresponding property.
Following Brenti~\cite{Bre90},
we say that a polynomial with nonnegative coefficients is a {\it $\La$-polynomial}
if it is both palindromic and unimodal.

$\La$-polynomials arise often in algebra, analysis, and
combinatorics. Two most well-known $\La$-polynomials are
$1+q+\cdots+q^n$ and $(1+q)^n=\sum_{i=0}^n\binom{n}{i}q^i$. Another
famous $\La$-polynomial is the Gaussian polynomial defined by
$$\binom{n}{i}_q=\frac{(q^n-1)(q^{n-1}-1)\cdots(q^{n-i+1}-1)}{(q^i-1)(q^{i-1}-1)\cdots(q-1)}.$$
The symmetry of the Gaussian polynomial follows immediately by
definition. But the unimodality of the Gaussian polynomial is not so
easy to prove. The first proof of the unimodality was given by
Sylvester~\cite{Syl} in 1878 using the classical theory of
invariants. Then there have been several different proofs based on
methods of Lie algebras, linear algebra, algebraic geometry or
P\'{o}lya theory~\cite{Pro82}. In 1990, K. O'Hara~\cite{Koh90} gave
her celebrated constructive proof, which, roughly speaking, is to
decompose the Gaussian polynomial into a finite sum of suitable
palindromic polynomials of the same darga~\cite{Zei89b}.

To prove the unimodality of a polynomial is often a very difficult task.
The case for palindromic polynomials is somewhat different.
$\La$-polynomials have much better behavior than unimodal polynomials.
For example, the product of two unimodal polynomials is not unimodal in general;
however, the product of two $\La$-polynomials is still a $\La$-polynomial \cite[Theorem 1]{And75}.
This is the motivation for us to study palindromic polynomials.
The object of the present paper is to explore such a topic from a viewpoint of linear algebra.

The organization of the paper is as follows.
We first study algebraic properties of palindromic polynomials in the next section.
Palindromic polynomials of darga $n$ form a linear space of dimension $\lrf{n/2}+1$.
We give transition matrices between the bases
$\left\{q^j(1+q+\cdots+q^{n-2j})\right\}, \left\{q^j(1+q)^{n-2j}\right\}$ and the standard basis $\left\{q^j(1+q^{n-2j})\right\}$.
Then in Section 3 we study palindromic polynomials that can be expressed
in terms of the first two bases with nonnegative coefficients respectively.
Certain known results can be obtained and extended in a unified approach.
Finally in Section 4
We point out the link between palindromic polynomials and rank-generating functions of posets.

Throughout this paper all polynomials considered are real.
As usual, denote by $\mathbb{R}[q]_{n+1}$ the set of real polynomials of degree at most $n$,
and let the binomial coefficient $\binom{n}{i}=0$ unless $0\le i\le n$.
For the sake of convenient, denote $\on=\lrf{n/2}$ and $0$ is viewed as a palindromic polynomial (of any darga).

\section{Palindromic polynomials}
\hspace*{\parindent}

We study algebraic properties of palindromic polynomials in this section.
Some of them are obvious and have occurred in the literature.

Let $f(q)$ be a polynomial of darga $n$.
The reciprocal polynomial $f^*$ of $f$ is defined by
$$f^*(q)=q^nf\left(\frac{1}{q}\right).$$
In other words, if $f(q)=a_rq^r+\cdots+a_sq^s$, where $a_r\neq 0, a_s\neq 0$ and $r+s=n$, then $f^*(q)=a_sq^r+\cdots+a_rq^s$.
The following characterization for palindromic polynomials is direct.

\begin{prop}\label{sym-c}
A polynomial $f(q)$ is palindromic if and only if $f^*(q)=f(q)$.
\end{prop}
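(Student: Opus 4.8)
The plan is to prove both implications at once by computing $f^*$ explicitly and then invoking the basic principle that two polynomials agree if and only if all of their coefficients agree. Since the statement is stated for $f(q)=a_rq^r+\cdots+a_sq^s$ with $a_r,a_s\neq 0$ and $\dar f=r+s=n$, the entire content reduces to reading off the coefficients of $f^*$ and matching them against the palindromic condition $a_{r+i}=a_{s-i}$.

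First I would write $f(q)=\sum_{k=r}^{s}a_kq^k$ and substitute into the definition $f^*(q)=q^nf(1/q)$. Replacing $q$ by $1/q$ gives $f(1/q)=\sum_{k=r}^{s}a_kq^{-k}$, so that
$$f^*(q)=q^nf\!\left(\frac1q\right)=\sum_{k=r}^{s}a_kq^{\,n-k}.$$
Next I would re-index using the defining relation $n=r+s$. Putting $j=n-k$, as $k$ runs from $r$ to $s$ the exponent $j=n-k$ runs from $n-s=r$ to $n-r=s$, whence
$$f^*(q)=\sum_{j=r}^{s}a_{n-j}\,q^{\,j}=\sum_{j=r}^{s}a_{r+s-j}\,q^{\,j}.$$
In particular the lowest and highest powers of $f^*$ are again $q^r$ and $q^s$, with coefficients $a_s$ and $a_r$ respectively, so $f^*$ is a genuine polynomial of the same darga $n$. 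Reading off the coefficient of $q^{r+i}$ in $f^*$ gives $a_{r+s-(r+i)}=a_{s-i}$.

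Finally, comparing with the coefficient $a_{r+i}$ of $q^{r+i}$ in $f$, the identity $f^*(q)=f(q)$ holds precisely when $a_{r+i}=a_{s-i}$ for every $i$, which is exactly the definition of $f$ being palindromic; the trivial case $f=0$ is immediate since $0^*=0$. The only point demanding care is the index bookkeeping in the substitution step: one must verify that multiplying $f(1/q)$ by $q^n$ returns the exponent range exactly to $[r,s]$ rather than shifting it, and this is guaranteed precisely by the hypothesis $r+s=n$. Beyond this routine re-indexing there is no real obstacle, as the proposition is essentially a transcription of the definitions once $f^*$ has been written out in standard form.
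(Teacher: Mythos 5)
Your proof is correct and is essentially the paper's own argument: the paper declares the proposition ``direct'' immediately after observing that $f^*(q)=a_sq^r+\cdots+a_rq^s$, i.e., that $f^*$ reverses the coefficient sequence, which is precisely the computation you spell out. Your version merely makes the re-indexing $j=n-k$ and the coefficient comparison explicit, so there is nothing to add or correct.
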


\begin{coro}\label{lv}
Suppose that $a\in\R$, $f(q)$ and $g(q)$ are two palindromic polynomials of darga $n$.
Then $af(q)$ and $f(q)+g(q)$ are palindromic polynomials of darga $n$.
\end{coro}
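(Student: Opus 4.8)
The plan is to reduce both assertions to the characterization in Proposition~\ref{sym-c}, exploiting the linearity of the reciprocal operation, while keeping an eye on the one genuinely non-formal point: showing that the darga of the sum stays equal to $n$.

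First I would record that, for a fixed $n$, the map $f\mapsto q^nf(1/q)$ is $\R$-linear on the set of polynomials whose exponents all lie in $\{0,1,\ldots,n\}$, since $q^n(af+bg)(1/q)=a\,q^nf(1/q)+b\,q^ng(1/q)$. Because $f$ and $g$ both have darga $n$, they are supported on exponents in $\{0,\ldots,n\}$, and so is any linear combination of them.

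For scalar multiplication: if $a=0$ then $af=0$ is palindromic of every darga by our convention, so I may assume $a\neq 0$. Then multiplying by $a$ changes neither the lowest nor the highest power of $f$, whence $\dar(af)=\dar f=n$ and $(af)^*(q)=q^n(af)(1/q)=a\,q^nf(1/q)=af^*(q)=af(q)$, using $f^*=f$ from Proposition~\ref{sym-c}. Thus $(af)^*=af$, and Proposition~\ref{sym-c} gives that $af$ is palindromic of darga $n$.

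For the sum I would write $f=\sum_{i=0}^n a_iq^i$ and $g=\sum_{i=0}^n b_iq^i$, padding with zero coefficients; palindromicity of darga $n$ means exactly $a_i=a_{n-i}$ and $b_i=b_{n-i}$ for all $i$, so $c_i:=a_i+b_i$ satisfies $c_i=c_{n-i}$. The main (and only) obstacle is that cancellation among the top coefficients could in principle lower the darga of $f+g$; the point is that the symmetry $c_i=c_{n-i}$ forces any such cancellation at the top to be mirrored at the bottom. Concretely, if every $c_i$ vanishes then $f+g=0$ is palindromic of darga $n$ by convention; otherwise, setting $r=\min\{i:c_i\neq 0\}$ and $s=\max\{i:c_i\neq 0\}$, the relation $c_i=c_{n-i}$ gives $c_{n-r}=c_r\neq 0$ and $c_{n-s}=c_s\neq 0$, so $n-r\le s$ and $s\le n-r$, i.e. $s=n-r$. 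Hence $\dar(f+g)=r+s=n$ and the string $(c_r,\ldots,c_s)$ is palindromic. Equivalently, once $\dar(f+g)=n$ is established, $(f+g)^*(q)=q^n(f+g)(1/q)=f^*(q)+g^*(q)=f(q)+g(q)$, so Proposition~\ref{sym-c} again applies and completes the argument.
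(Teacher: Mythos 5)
Your proof is correct and follows the route the paper intends: the paper states this corollary without proof as an immediate consequence of Proposition~\ref{sym-c}, and your argument via the linearity of $f\mapsto q^nf(1/q)$ is exactly that justification. Your extra care in checking that cancellation cannot lower the darga of $f+g$ (the symmetry $c_i=c_{n-i}$ forces $s=n-r$, with the zero polynomial handled by convention) fills in a detail the paper treats as obvious, and is a welcome addition rather than a deviation.
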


It is clear that $[f(q)g(q)]^*=f^*(q)g^*(q)$. So the following result is immediate.

\begin{coro}\label{fg}
Let $f(q)$ and $g(q)$ be two palindromic polynomials of darga $n$ and $m$ respectively. Then
\begin{enumerate}
  \item [\rm (i)] $f(q)g(q)$ is a palindromic polynomial of darga $n+m$;
      in particular, $q^jf(q)$ is a palindromic polynomial of darga $n+2j$.
  \item [\rm (ii)] If $g(q)|f(q)$ and $f(q)=g(q)h(q)$, then $h(q)$ is also palindromic. 
\end{enumerate}
\end{coro}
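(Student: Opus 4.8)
The plan is to reduce both parts to the characterization of Proposition~\ref{sym-c}, namely that a polynomial is palindromic precisely when it equals its own reciprocal, and to combine this with the multiplicativity $[f(q)g(q)]^*=f^*(q)g^*(q)$ recorded just above. The only genuine bookkeeping is to keep track of dargas, since the reciprocal operation is defined relative to the darga.

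For part (i), I would first check that $\dar(fg)=n+m$. Writing $f(q)=a_rq^r+\cdots+a_sq^s$ and $g(q)=b_tq^t+\cdots+b_uq^u$ with $a_r,a_s,b_t,b_u\neq 0$, the lowest and highest terms of $fg$ are $a_rb_tq^{r+t}$ and $a_sb_uq^{s+u}$; these do not vanish because $\R[q]$ is an integral domain, so $\dar(fg)=(r+t)+(s+u)=(r+s)+(t+u)=n+m$. Now $f^*=f$ and $g^*=g$ by Proposition~\ref{sym-c}, so the multiplicativity gives $[fg]^*=f^*g^*=fg$, and a second appeal to Proposition~\ref{sym-c} shows $fg$ is palindromic of darga $n+m$. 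The ``in particular'' clause is then the special case $g(q)=q^j$, which is palindromic of darga $2j$.

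For part (ii), the key preliminary step is to pin down $\dar h$ before the reciprocal of $h$ even makes sense. Since $f=gh$ and darga is additive on products (by the computation in part (i)), we must have $\dar h=\dar f-\dar g=n-m$. With this darga fixed, applying the reciprocal to the identity $f=gh$ yields $f^*=g^*h^*$; using $f^*=f$ and $g^*=g$ this becomes $gh=f=f^*=g^*h^*=gh^*$. Cancelling the nonzero factor $g$ in the integral domain $\R[q]$ gives $h=h^*$, so $h$ is palindromic by Proposition~\ref{sym-c}.

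I expect no serious obstacle here, as the statement is a formal consequence of Proposition~\ref{sym-c} together with the identity $[fg]^*=f^*g^*$. The one point requiring care is that the star operation depends on the declared darga, so in part (ii) one cannot even write $h^*$ until the equality $\dar h=n-m$ has been established, and one must apply the product identity with the matching dargas $m$ and $n-m$. The integral-domain property of $\R[q]$ is used twice: once to see that the extreme terms of a product do not cancel, so that darga is additive, and once to cancel the common factor $g$ in the final step.
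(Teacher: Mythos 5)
Your proof is correct and takes essentially the same approach as the paper, which treats the corollary as an immediate consequence of Proposition~\ref{sym-c} together with the identity $[f(q)g(q)]^*=f^*(q)g^*(q)$. The details you supply---additivity of darga via the integral-domain property, and the cancellation of $g$ in part (ii)---are exactly the bookkeeping the paper leaves implicit in the word ``immediate.''
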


Let $f(q)$ be a palindromic polynomial.
If $z_0\neq 0$ is a zero of $f(q)$, then $1/z_0$ is also a zero of $f(q)$ and has the same multiplicity.
On the other hand, the complex zeros of a real polynomial occur in conjugate pairs.
So a irreducible palindromic polynomial must be linear, quadratic or quartic.

\begin{prop}\label{factor}
Let $f(q)\in\mathbb{R}^+[q]$.
Then $f(q)$ is palindromic if and only if 
$$f(q)=aq^r(1+q)^e\prod_{i=1}^k\left[(1+r_iq)(r_i+q)\right]^{e_i}\prod_{j=1}^{\ell}\left[(1+b_jq+c_jq^2)(c_j+b_jq+q^2)\right]^{\delta_j},$$
where $r_i\neq 1$ and $b_j^2<4c_j$.
\end{prop}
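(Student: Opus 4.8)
The plan is to prove both implications by reducing everything to the two facts already established: Proposition~\ref{sym-c} (a polynomial is palindromic iff it equals its own reciprocal) and Corollary~\ref{fg} (products and exact quotients of palindromic polynomials stay palindromic). For the ``if'' direction I would first record the general principle that for any nonzero polynomial $h$ one has $(h^*)^*=h$, so that $(hh^*)^*=h^*h^{**}=h^*h=hh^*$; hence $hh^*$ is \emph{automatically} palindromic. Applying this to $h=1+r_iq$, whose reciprocal is $r_i+q$, and to $h=1+b_jq+c_jq^2$, whose reciprocal is $c_j+b_jq+q^2$, shows that each quadratic and quartic block in the product is palindromic, while $q$, $1+q$, and the constant $a$ are palindromic by inspection. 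Corollary~\ref{fg}(i), together with Corollary~\ref{lv}, then makes the whole product palindromic, which disposes of the easy direction.

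For the ``only if'' direction I would pass to the complex roots. First I factor out the largest power $q^r$ dividing $f$; by Corollary~\ref{fg}(ii) the cofactor $g$ is palindromic with $g(0)\neq 0$, and it still has nonnegative coefficients. Writing $g=a_s\prod_i(q-z_i)$ and using the identity $g^*=a_s\left(\prod_i(-z_i)\right)\prod_i(q-1/z_i)$, the equality $g^*=g$ coming from Proposition~\ref{sym-c} forces the multiset of roots to be invariant under $z\mapsto 1/z$ with multiplicities preserved; since $g$ is real it is also invariant under $z\mapsto\bar z$. I then classify the orbits of the group generated by these two involutions: nonnegativity of the coefficients rules out the root $1$ and any positive real root, so the surviving orbits are $\{-1\}$, real pairs $\{-r_i,-1/r_i\}$ with $r_i>0$ and $r_i\neq 1$, and complex quadruples $\{z,\bar z,1/z,1/\bar z\}$ with $z$ off the unit circle (the boundary case $|z|=1$, where $1/z=\bar z$ and the quadruple collapses to a self-reciprocal quadratic, being the one to handle separately). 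Collecting the linear factors over each orbit reproduces exactly the blocks $1+q$, $(1+r_iq)(r_i+q)$, and $(1+b_jq+c_jq^2)(c_j+b_jq+q^2)$; irreducibility of the complex quadratics gives $b_j^2<4c_j$, and the remaining positive scalar is $a$.

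The main obstacle is the bookkeeping in the forward direction rather than any single hard estimate. One must verify that the involutions $z\mapsto 1/z$ and $z\mapsto\bar z$ genuinely preserve multiplicities --- this is precisely the remark preceding the statement --- and that they can be applied simultaneously so that each orbit assembles into a \emph{real} factor of the displayed shape; note that the reciprocal map is only available because $g(0)\neq 0$, which is why extracting $q^r$ first is essential. I would also emphasize that the ``nonnegative coefficients'' hypothesis is a \emph{global} one and is not inherited by the palindromic cofactors encountered along the way --- for instance $(1+q^3)/(1+q)=1-q+q^2$ already fails to be nonnegative --- so the proof cannot proceed by naively peeling off one palindromic factor at a time; handling the entire complex root multiset at once is what makes the grouping succeed. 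The only delicate points remaining are the sign conditions $r_i>0$ and $b_j^2<4c_j$, and both follow from the root locations forced by nonnegativity of the coefficients and by irreducibility over $\R$.
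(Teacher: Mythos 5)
Your strategy---invariance of the zero multiset under $z\mapsto 1/z$ and $z\mapsto\bar z$, followed by assembling the orbits into real factors---is exactly the argument the paper itself sketches in the two sentences preceding the proposition (the paper supplies no proof beyond that remark), and your easy direction and your treatment of the real zeros are fine. The genuine gap is the case you explicitly set aside and never return to: a non-real zero $z$ with $|z|=1$. Its orbit collapses to $\{z,\bar z\}$ and contributes the self-reciprocal irreducible quadratic $q^2-2\Re(z)q+1=1+bq+q^2$ with $b^2<4$. Such a factor is \emph{not} a product of the displayed blocks: setting $c_j=1$ in the quartic block yields $\left[(1+b_jq+q^2)\right]^2$, so the stated form can only produce unimodular quadratics to \emph{even} powers. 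Hence your step ``collecting the linear factors over each orbit reproduces exactly the blocks'' fails whenever a unimodular non-real zero occurs with odd multiplicity, and no amount of bookkeeping can repair it, because the statement as written is false in that case: $f(q)=1+q+q^2$ is palindromic with nonnegative coefficients, yet by unique factorization in $\R[q]$ it cannot be written in the displayed form---every admissible nonconstant block either contains a linear factor or has degree at least $4$, while $1+q+q^2$ is a single irreducible quadratic. The same objection applies to $1+q+q^2+q^3+q^4$ and, generally, to any cyclotomic-type example.

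So to complete the proof along your (and the paper's) lines, the target statement must be amended, e.g.\ by inserting a third family of factors $(1+b_jq+q^2)^{m_j}$ with $b_j^2<4$, equivalently by allowing the unimodular quadratics to occur to arbitrary rather than only even powers; with that correction your orbit analysis does close up, since the remaining orbit types are exactly $\{-1\}$, the real pairs $\{-r_i,-1/r_i\}$, and the off-circle quadruples, as you say. Your instinct that the $|z|=1$ case is ``the one to handle separately'' was precisely right---deferring it is where both your proof and the proposition itself break. It is worth noting that the paper only ever invokes this proposition for real-rooted $f$ (in the proof of Theorem~\ref{thm-pf}), where all zeros are real and negative, no unimodular complex zeros arise, and your argument is complete and correct.
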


\begin{coro}
Let $f(q)\in\mathbb{R}^+[q]$.

{\rm (i)}\quad
If $f(q)$ is a palindromic polynomial of odd dagar,
then $(1+q)|f(q)$.

{\rm (ii)}\quad
If the modulus of each zero of $f(q)$ is $1$,
then $f(q)$ is palindromic.
\end{coro}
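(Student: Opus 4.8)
\emph{Plan.}\quad I would treat the two parts separately, using the multiplicativity of darga (Corollary~\ref{fg}) together with the factorization of Proposition~\ref{factor} for (i), and a direct real factorization for (ii).

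For (i), I would record the darga of each factor type appearing in Proposition~\ref{factor}: $q^r$ has darga $2r$, $1+q$ has darga $1$, each bracket $(1+r_iq)(r_i+q)$ has darga $2$, and each bracket $(1+b_jq+c_jq^2)(c_j+b_jq+q^2)$ has darga $4$. Additivity then gives $\dar f=2r+e+2\sum_i e_i+4\sum_j\delta_j\equiv e\pmod 2$, so $\dar f$ is odd exactly when $e$ is odd; in that case $e\ge1$ and hence $1+q$ divides $f$. (Without Proposition~\ref{factor} one can argue just as quickly that, for palindromic $f$ of odd darga $n$, the summands of $f(-1)$ cancel in the pairs $\{i,n-i\}$---which have opposite parity---so $f(-1)=0$.)

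For (ii), I would first factor $f$ over $\R$ into a positive constant times monic linear and monic irreducible quadratic factors, and check that the factor carrying each modulus-$1$ zero is palindromic. A real zero of modulus $1$ equals $\pm1$, but $f\in\mathbb{R}^+[q]$ forces $f(1)>0$, so $1$ is not a zero and the only admissible linear factor is the palindromic $1+q$. A non-real zero of modulus $1$ appears together with its conjugate as $e^{\pm i\theta}$ and yields the real quadratic $q^2-2\cos\theta\,q+1$, again palindromic. Since products of palindromic polynomials are palindromic (Corollary~\ref{fg}) and positive scalar multiples preserve the property (Corollary~\ref{lv}), $f$ is palindromic.

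A shorter route to (ii) goes through the reciprocal characterization of Proposition~\ref{sym-c}: from $f(q)=a_n\prod_i(q-z_i)$ with $|z_i|=1$ one gets $f^*(q)=a_n\prod_i(1-z_iq)$, whose zeros are the $1/z_i=\bar z_i$; as the zero multiset of the real polynomial $f$ is conjugation-closed, $f$ and $f^*$ share all zeros, so $f^*=cf$, and matching the strictly positive leading and constant coefficients gives $c^2=1$ with $c>0$, i.e.\ $f^*=f$. In both approaches the one load-bearing step---and the point to state carefully---is the use of the hypothesis $f\in\mathbb{R}^+[q]$, which is precisely what rules out the zero $z=1$ and pins down the sign. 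The hypothesis cannot be dropped: over $\R[q]$ the polynomial $q-1$ has all its zeros on the unit circle yet is not palindromic.
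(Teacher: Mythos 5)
Your proposal is correct and follows essentially the same route as the paper: the corollary is stated there without proof, as an immediate consequence of Proposition~\ref{factor}, and your main arguments---the darga parity count over the factor types for (i), and the factorization into the palindromic factors $1+q$ and $q^2-2\cos\theta\,q+1$ combined with Corollaries~\ref{lv} and~\ref{fg} for (ii)---are precisely the intended deductions. Your parenthetical alternatives (the pairing argument giving $f(-1)=0$, and the reciprocal-polynomial computation forcing $f^*=f$ via Proposition~\ref{sym-c}) are also sound, and have the minor merit of not invoking Proposition~\ref{factor} at all.
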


\begin{de}
We say that $\{f_j(q)\}_{j\ge 0}$ is {\it a sequence of basic palindromic polynomials}
if $f_j(q)$ is a palindromic polynomial of dagar $j$ with nonzero constant term.
\end{de}

\begin{exm}\label{bsp}\rm
Three sequences of basic palindromic polynomials:
\begin{itemize}
  \item [\rm (i)] $S_0(q)=1$ and $S_j(q)=1+q^j$ for $j\ge 1$.
  \item [\rm (ii)] $A_0(q)=1$ and $A_j(q)=1+q+\cdots+q^j$ for $j\ge 1$.
  \item [\rm (iii)] $B_0(q)=1$ and $B_j(q)=(1+q)^j$ for $j\ge 1$.
\end{itemize}
\end{exm}

Let $\cp_n(q)$ denote the set of palindromic polynomials of darga $n$.
Then $\cp_n(q)$ forms a linear subspace by Corollary~\ref{lv}.
A sequence of basic palindromic polynomials can induce a basis of $\cp_n(q)$.
More precisely, we have the following.

\begin{thm}\label{sym}
Let $\{f_j(q)\}_{j\ge 0}$ be a sequence of basic palindromic polynomials.
Assume that $f(q)$ is a real polynomial of darga $n$.
Then $f(q)$ is palindromic if and only if $f(q)$ can be written as
\begin{equation}\label{exp}
    \sum_{j=0}^{\on}c_jq^jf_{n-2j}(q),\qquad c_j\in\mathbb{R}.
\end{equation}
In other words, $\cp_n(q)$ is a linear space of dimension $\on+1$,
with a basis
$$\F_n=\left\{q^jf_{n-2j}(q): j=0,1,\ldots, \on\right\}.$$
\end{thm}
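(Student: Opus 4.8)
The plan is to prove this by establishing two facts: that the proposed set $\F_n$ lies inside $\cp_n(q)$ and spans it, and that it is linearly independent; together with the already-known fact (Corollary~\ref{lv}) that $\cp_n(q)$ is a linear space, this yields both the ``if and only if'' characterization and the dimension count simultaneously. I would first verify that each generator $q^jf_{n-2j}(q)$ genuinely belongs to $\cp_n(q)$: since $f_{n-2j}(q)$ is a basic palindromic polynomial of darga $n-2j$, part~(i) of Corollary~\ref{fg} shows $q^jf_{n-2j}(q)$ is palindromic of darga $(n-2j)+2j=n$. Hence any linear combination of the form~(\ref{exp}) is palindromic of darga $n$ by Corollary~\ref{lv}, which immediately gives the ``if'' direction.

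For the ``only if'' direction, I would argue that every palindromic $f(q)$ of darga $n$ is a combination of the $q^jf_{n-2j}(q)$. The cleanest route is to exploit the triangular structure of these generators with respect to the degree of the lowest-order term. Writing $f_{n-2j}(q)$ with nonzero constant term, the polynomial $q^jf_{n-2j}(q)$ has lowest power exactly $q^j$ and highest power exactly $q^{n-j}$. Thus, given a palindromic $f(q)=\sum_{i} a_i q^i$, I would peel off the coefficients from the outside in: choose $c_0$ to match the extreme coefficients $a_0$ (equivalently $a_n$, by palindromy), subtract $c_0 f_n(q)$, then choose $c_1$ to match the next coefficient of $q^1$, and so on. Because the palindromic symmetry of $f$ is preserved under subtracting palindromic multiples, and because $q^jf_{n-2j}$ is the unique generator contributing to the $q^j$ coefficient at each stage, this greedy elimination terminates after $\on+1$ steps and expresses $f$ in the form~(\ref{exp}).

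The same triangularity yields linear independence: the coefficient matrix expressing $\{q^jf_{n-2j}\}$ in the monomial basis $\{1,q,\ldots,q^{\on}\}$ (restricted to the lower-half powers) is lower-triangular with nonzero diagonal entries (the nonzero constant terms of the $f_{n-2j}$), hence invertible. Since the $\on+1$ elements of $\F_n$ are linearly independent and span $\cp_n(q)$, they form a basis and $\dim\cp_n(q)=\on+1$.

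I expect the main obstacle to be bookkeeping the symmetry constraint correctly rather than any deep difficulty: one must be careful that a palindromic polynomial of darga $n$ is genuinely determined by its lower-half coefficients $a_0,\ldots,a_{\on}$ (with the parity of $n$ affecting whether a middle coefficient is free), and that each elimination step respects palindromy so that the reduced polynomial remains in $\cp_n(q)$ at every stage. Making the triangular system precise—indexing the generators by their lowest power and confirming the diagonal entries are exactly the nonzero constant terms of the basic polynomials—is the crux that makes both spanning and independence fall out at once.
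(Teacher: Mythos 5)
Your proposal is correct and takes essentially the same route as the paper: the paper's ``only if'' argument is an induction on darga whose inductive step is exactly your outside-in elimination (subtract $a_rq^rf_{s-r}(q)$ to kill the extreme coefficients, observe the remainder is palindromic of darga at most $n-2$, and recurse), i.e.\ the same triangularity with respect to the lowest power that you exploit. Your explicit linear-independence argument via the lower-triangular matrix with nonzero diagonal (the constant terms of the $f_{n-2j}$) is a small addition that the paper leaves implicit in asserting $\F_n$ is a basis, but the substance of the two proofs is the same.
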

\begin{proof}
The ``if" part follows from Corollary~\ref{lv}
since all terms $q^jf_{n-2j}(q)$ in the summation (\ref{exp}) are palindromic polynomials with the same darga $n$.
We next prove the ``only if" part by induction on $\dar f$.
We may assume, without loss of generality, that all $f_j$ are monic.

Let $f(q)=a_rq^r+\cdots+a_sq^s$ be a palindromic polynomial of darga $n$,
where $a_r=a_s\neq 0$ and $r+s=n$.
Define
$$g(q)=\frac{f(q)-a_rq^rf_{s-r}(q)}{q}.$$
Then $g(q)$ is a palindromic polynomial of darga $\le n-2$.
By the induction hypothesis,
$$g(q)=\sum_{j=0}^{\on-1}b_jq^jf_{n-2-2j}(q),\qquad b_j\in\R.$$
Hence
$$f(q)=a_rq^rf_{s-r}(q)+\sum_{j=0}^{\on-1}b_jq^{j+1}f_{n-2-2j}(q)=\sum_{j=0}^{\on}c_jq^jf_{n-2j}(q),$$
where $c_j\in\R$ for all $j$.
This completes the proof.
\end{proof}

We next consider the transition matrix between two bases of the linear space $\mathcal{P}_n(q)$.

\begin{thm}\label{trans-matrix}
Let $\{f_j(q)\}_{j\ge 0}$ and $\{g_j(q)\}_{j\ge 0}$ be two sequences of basic palindromic polynomials.
Suppose that
$$g_j(q)=\sum_{k=0}^{\lrf{j/2}}t(j,k)q^kf_{j-2k}(q).$$
Then the transition matrix from the basis $\F_n=\{q^jf_{n-2j}(q)\}_{j=0}^{\on}$ of $\mathcal{P}_n(q)$ to the basis $\G_n=\{q^jg_{n-2j}(q)\}_{j=0}^{\on}$
is the $(\on+1)\times (\on+1)$ lower triangle matrix
$$M(\F_n,\G_n)=\left[
   \begin{array}{ccccc}
     t(n,0) & 0 & 0 & \cdots &  0\\
     t(n,1) & t(n-2,0) & 0& \cdots & 0 \\
     t(n,2) & t(n-2,1) & t(n-4,0) & \cdots  & 0 \\
     \vdots & \vdots & \vdots & \vdots  & \vdots  \\
     t(n,\on) & t(n-2,\on-1) &  t(n-4,\on-2) &\cdots  & t(n-2\on,0) \\
   \end{array}
 \right],$$
where the $(i,j)$ entry of $M(\F_n,\G_n)$ is $t(n-2j,i-j)$ for $0\le
j\le i\le \on$.
\end{thm}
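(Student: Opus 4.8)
The plan is to obtain each column of the transition matrix directly by expanding the corresponding basis vector of $\G_n$ in terms of $\F_n$, using the given expansion of $g_j$ in the $f$-basis. By definition, the $j$-th column of $M(\F_n,\G_n)$ records the coordinates of the $j$-th element $q^jg_{n-2j}(q)$ of $\G_n$ with respect to the basis $\F_n=\{q^if_{n-2i}(q)\}_{i=0}^{\on}$, so it suffices to compute these coordinates for each $j$ with $0\le j\le\on$.

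First I would substitute $j\mapsto n-2j$ into the hypothesis $g_j(q)=\sum_{k=0}^{\lrf{j/2}}t(j,k)q^kf_{j-2k}(q)$ to get
$$g_{n-2j}(q)=\sum_{k=0}^{\lrf{(n-2j)/2}}t(n-2j,k)\,q^kf_{n-2j-2k}(q),$$
and then multiply through by $q^j$, which yields
$$q^jg_{n-2j}(q)=\sum_{k=0}^{\lrf{(n-2j)/2}}t(n-2j,k)\,q^{j+k}f_{n-2j-2k}(q).$$
The key bookkeeping step is the change of summation index $i=j+k$. As $k$ runs from $0$ to $\lrf{(n-2j)/2}$, the new index $i$ runs from $j$ to $j+\lrf{(n-2j)/2}=\lrf{n/2}=\on$, where the equality uses that $2j$ passes through the floor. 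Since $k=i-j$, the exponent $j+k$ becomes $i$ and the subscript $n-2j-2k$ becomes $n-2j-2(i-j)=n-2i$, so
$$q^jg_{n-2j}(q)=\sum_{i=j}^{\on}t(n-2j,i-j)\,q^{i}f_{n-2i}(q).$$

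Reading off this expansion gives the result at once: the coefficient of the $i$-th basis vector $q^if_{n-2i}(q)$ of $\F_n$ in the $j$-th basis vector $q^jg_{n-2j}(q)$ of $\G_n$ is exactly $t(n-2j,i-j)$, and it vanishes whenever $i<j$. Hence the $(i,j)$ entry of $M(\F_n,\G_n)$ equals $t(n-2j,i-j)$ for $0\le j\le i\le\on$ and is $0$ for $i<j$, which is precisely the stated lower triangular matrix. I expect the only delicate point to be the index-range computation $j+\lrf{(n-2j)/2}=\on$ together with keeping the floor bounds consistent; everything else is a direct substitution, and no genuine obstacle arises since the expansion in Theorem~\ref{sym} guarantees the coordinates are well defined and unique.
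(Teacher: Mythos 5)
Your proof is correct and follows exactly the same route as the paper: expand $q^jg_{n-2j}(q)$ via the given $f$-expansion, reindex with $i=j+k$ (using $j+\lrf{(n-2j)/2}=\on$), and read off the columns of the transition matrix. The only difference is that you spell out the index bookkeeping that the paper leaves implicit.
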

\begin{proof}
Note that
$$q^jg_{n-2j}(q)=\sum_{k=0}^{\lrf{(n-2j)/2}}t(n-2j,k)q^{j+k}f_{n-2j-2k}(q)=\sum_{i=j}^{\on}t(n-2j,i-j)q^{i}f_{n-2i}(q)$$
for $j=0,1,\ldots,\on$.
Hence the statement follows immediately.
\end{proof}

Let us examine transition matrices between the following three bases 
induced by sequences of basic palindromic polynomials in Example~\ref{bsp}:
\begin{itemize}
  \item [\rm (i)] $\S=\{q^jS_{n-2j}(q)\}_{j=0}^{\on}$, where $S_0(q)=1$ and $S_j(q)=1+q^j$ for $j\ge 1$.
  \item [\rm (ii)] $\A=\{q^jA_{n-2j}(q)\}_{j=0}^{\on}$, where $A_0(q)=1$ and $A_j(q)=1+q+\cdots+q^j$ for $j\ge 1$.
  \item [\rm (iii)] $\B=\{q^jB_{n-2j}(q)\}_{j=0}^{\on}$, where $B_0(q)=1$ and $B_j(q)=(1+q)^j$ for $j\ge 1$.
\end{itemize}

\begin{coro}
The transition matrices
$$M(\S,\A)=\left[
   \begin{array}{ccccc}
     1 &  &  &  &  \\
     1 & 1 &  & O &  \\
     \vdots & \vdots & \ddots &  &  \\
     1 & 1 & \cdots & 1 &  \\
     1 & 1 & \cdots & 1 & 1 \\
   \end{array}
 \right]
$$
and
$$M(\S,\B)=(b_{i,j})_{(\on+1)\times (\on+1)}
=\left[
   \begin{array}{cccccc}
     1 &  &  &  & & \\
     \binom{n}{1} & 1 & && O   &  \\
     \binom{n}{2} & \binom{n-2}{1} & 1 & &  &  \\
     \vdots & \vdots & \vdots & \ddots  & &  \\
     \binom{n}{\on-1} & \binom{n-2}{\on-2} & \binom{n-4}{\on-3} & \cdots & 1 &  \\
     \binom{n}{\on} & \binom{n-2}{\on-1} &  \binom{n-4}{\on-2} &\cdots & \binom{n-2\on+2}{1} & 1 \\
   \end{array}
 \right],$$
respectively, where $b_{i,j}=0$ for $i<j$ and $b_{i,j}=\binom{n-2j}{i-j}$ for $0\le j\le i\le \on$.
\end{coro}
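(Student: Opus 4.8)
The plan is to obtain both matrices as direct specializations of Theorem~\ref{trans-matrix}. In that theorem the entries of $M(\F_n,\G_n)$ are completely determined by the one-variable expansion coefficients $t(j,k)$ defined through $g_j(q)=\sum_k t(j,k)q^kf_{j-2k}(q)$. In both cases here the $f$-sequence is the $S$-sequence, while the $g$-sequence is the $A$-sequence for $M(\S,\A)$ and the $B$-sequence for $M(\S,\B)$. Thus the entire task reduces to computing how $A_j(q)$ and $(1+q)^j$ decompose over the basic polynomials $S_i(q)=1+q^i$, after which the matrix entries are read off as $t(n-2j,i-j)$.

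First I would handle $M(\S,\A)$, where the key identity is $A_j(q)=\sum_{k=0}^{\lrf{j/2}}q^kS_{j-2k}(q)$, i.e.\ $t(j,k)\equiv 1$. Indeed, $q^kS_{j-2k}(q)=q^k+q^{j-k}$ whenever $j-2k\ge 1$, and equals the single monomial $q^{j/2}$ when $j$ is even and $k=j/2$. As $k$ runs from $0$ to $\lrf{j/2}$, the ``low'' monomials $q^0,\ldots,q^{\lrf{j/2}}$ together with the ``high'' monomials $q^{j},q^{j-1},\ldots$ list every power $q^0,\ldots,q^j$ exactly once, so the sum is $1+q+\cdots+q^j=A_j(q)$. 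Substituting $t\equiv 1$ into Theorem~\ref{trans-matrix} makes the $(i,j)$ entry equal to $1$ for all $0\le j\le i\le\on$, which is precisely the all-ones lower-triangular matrix.

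Next I would treat $M(\S,\B)$, where the target identity is $(1+q)^j=\sum_{k=0}^{\lrf{j/2}}\binom{j}{k}q^kS_{j-2k}(q)$, that is $t(j,k)=\binom{j}{k}$. Writing $\binom{j}{k}q^kS_{j-2k}(q)=\binom{j}{k}(q^k+q^{j-k})$ (again with the single-term convention at $k=j/2$ for even $j$), pairing $q^k$ with $q^{j-k}$ and using $\binom{j}{k}=\binom{j}{j-k}$ shows that the coefficient of $q^i$ in the sum is $\binom{j}{i}$ for every $0\le i\le j$; by the binomial theorem this sum equals $(1+q)^j$. Feeding $t(j,k)=\binom{j}{k}$ into Theorem~\ref{trans-matrix} then gives the $(i,j)$ entry $t(n-2j,i-j)=\binom{n-2j}{i-j}$, which is exactly $b_{i,j}$.

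The argument is in essence a direct computation, so there is no deep obstacle; the only point demanding care is the boundary behaviour of the basic polynomials, namely that $S_0(q)=1$ contributes a single monomial rather than the generic pair $q^k+q^{j-k}$. I would therefore split on the parity of $j$ when verifying the two expansions, checking the even case $j=2m$ (where the middle term $q^mS_0$ is unpaired) separately from the odd case $j=2m+1$ (where every term pairs off), so as to confirm that each power of $q$ is accounted for with the correct multiplicity.
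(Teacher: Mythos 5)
Your proposal is correct and takes essentially the same route as the paper: both reduce the corollary to Theorem~\ref{trans-matrix} via the expansions $1+q+\cdots+q^j=\sum_{k=0}^{\lrf{j/2}}q^k S_{j-2k}(q)$ and $(1+q)^j=\sum_{k=0}^{\lrf{j/2}}\binom{j}{k}q^k S_{j-2k}(q)$, then read off the entries $t(n-2j,i-j)$. The only difference is that the paper simply states these two identities as evident, while you verify them in detail (the pairing $q^k+q^{j-k}$ and the unpaired middle term $S_0$ for even $j$), which is a harmless elaboration rather than a different argument.
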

\begin{proof}
Note that
$$1+q+\cdots+q^n=(1+q^n)+q(1+q^{n-2})+q^2(1+q^{n-4})+\cdots$$
and
$$(1+q)^n=(1+q^n)+\binom{n}{1}q(1+q^{n-2})+\binom{n}{2}q^2(1+q^{n-4})+\cdots.$$
Hence the statement follows from Theorem~\ref{trans-matrix}.
\end{proof}

Other transition matrices between these three bases can be deduced from $M(\S,\A)$ and $M(\S,\B)$.
For example,
$$
M(\A,\S)=M^{-1}(\S,\A)=\left[
   \begin{array}{rrrrr}
     1 &  &  &  &  \\
     -1 & 1 &  & O &  \\
     0 & -1 & 1 & & \\
     \vdots & \vdots  & \ddots & \ddots &  \\
     0 & 0 & \cdots & -1 & 1 \\
   \end{array}
 \right],
$$
and
$$M(\A,\B)=M(\A,\S)M(\S,\B)=(d_{i,j})_{(\on+1)\times (\on+1)},$$
where $d_{i,j}=0$ for $i<j$ and $d_{i,j}=b_{i,j}-b_{i-1,j}=\binom{n-2j}{i-j}-\binom{n-2j}{i-j-1}$
for $0\le j\le i\le \on$.
It is not difficult to verify that all $d_{i,j}$ are positive for $0\le j\le i\le \on$.

We can also obtain $M(\B,\S)$ by another approach.

\begin{coro}\label{B2S}
Let $M(\B,\S)=(c_{i,j})_{(\on+1)\times (\on+1)}$.
Then $c_{i,j}=0$ for $i<j$ and $$c_{i,j}=(-1)^{i-j}\frac{n-2j}{n-i-j}\binom{n-i-j}{i-j}$$
for $0\le j\le i\le \on$.
\end{coro}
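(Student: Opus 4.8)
The plan is to deduce $M(\B,\S)$ from Theorem~\ref{trans-matrix} directly, instead of inverting $M(\S,\B)$. Taking $\{f_j\}=\{B_j\}$ and $\{g_j\}=\{S_j\}$ in that theorem, the $(i,j)$ entry of $M(\B,\S)$ is $c_{i,j}=t(n-2j,\,i-j)$, where the scalars $t(j,k)$ are determined by expanding each standard palindromic polynomial in the binomial basis,
$$1+q^j=S_j(q)=\sum_{k=0}^{\lrf{j/2}}t(j,k)\,q^k(1+q)^{j-2k}.$$
Hence the whole problem reduces to computing the single family $t(j,k)$. Once I show $t(j,k)=(-1)^k\frac{j}{j-k}\binom{j-k}{k}$ for $j\ge 1$, the substitutions $j\mapsto n-2j$, $k\mapsto i-j$, together with $(n-2j)-(i-j)=n-i-j$, turn it into the stated formula for $c_{i,j}$; and $c_{i,j}=0$ for $i<j$ is already built into the lower-triangular shape in Theorem~\ref{trans-matrix}.

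To pin down $t(j,k)$ I would unmask the displayed expansion as a Chebyshev identity. Setting $q=u^2$ one checks $q^k(1+q)^{j-2k}=u^{j}(u+u^{-1})^{j-2k}$ and $1+q^j=u^{j}(u^{j}+u^{-j})$; dividing the expansion by $u^{j}$ converts it into the Laurent identity
$$u^{j}+u^{-j}=\sum_{k=0}^{\lrf{j/2}}t(j,k)\,(u+u^{-1})^{j-2k}.$$
This is precisely the classical expansion of $u^{j}+u^{-j}$ as a polynomial in $u+u^{-1}$, i.e. the Dickson polynomial $D_j(x,1)$ (equivalently $2T_j(x/2)$), whose closed form is $D_j(x,1)=\sum_k(-1)^k\frac{j}{j-k}\binom{j-k}{k}x^{j-2k}$. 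Matching coefficients against the linearly independent powers $(u+u^{-1})^{j-2k}$ then hands over the asserted value of $t(j,k)$, and with it the formula for $c_{i,j}$ whenever $n-2j\ge 1$.

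The step I expect to require the most care is the boundary, not the reduction. The Chebyshev closed form is valid only for $j\ge 1$, so the single corner with first index $n-2j=0$ (which arises only when $n$ is even and $i=j=\on$) falls outside it; there one reads the formula $\frac{n-2j}{n-i-j}\binom{n-i-j}{i-j}$ as $\tfrac{0}{0}\binom{0}{0}$ and must assign it its true value $1$, which is immediate from $q^{\on}S_0(q)=q^{\on}=q^{\on}B_0(q)$. A fully self-contained alternative to the substitution is induction via a three-term recurrence, but this is exactly where the same subtlety bites: the honest recurrence is $1+q^{j}=(1+q)(1+q^{j-1})-q(1+q^{j-2})$ for the polynomials $1+q^{j}$, whose initial datum is $1+q^{0}=2$ rather than the basic polynomial $S_0=1$, so the matching coefficient recurrence $t(j,k)=t(j-1,k)-t(j-2,k-1)$ must be launched from $t(0,0)=2$, $t(1,0)=1$ and then reconciled by hand with the genuine expansion at the bottom.
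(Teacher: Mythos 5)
Your proof is correct, and it reaches the formula by a route that is parallel to, but genuinely distinct from, the paper's. The paper starts from the already-computed forward expansion $B_n(q)=\sum_{i}\binom{n}{i}q^iS_{n-2i}(q)$ (i.e., from $M(\S,\B)$), symmetrizes it by multiplying through by $q^{-n/2}$, and then invokes the Chebyshev \emph{inversion pair} from Riordan to conclude $S_n(q)=\sum_i C(n,i)q^iB_{n-2i}(q)$ with $C(n,i)=(-1)^i\frac{n}{n-i}\binom{n-i}{i}$; Theorem~\ref{trans-matrix} then yields $c_{i,j}=C(n-2j,i-j)$, exactly as in your reduction. You instead compute the expansion of $S_j$ in the $\B$-basis directly: your substitution $q=u^2$ is the same symmetrization in different clothes, and it converts the problem into the classical expansion of $u^j+u^{-j}$ in powers of $u+u^{-1}$, whose coefficients are the Dickson/Chebyshev coefficients $(-1)^k\frac{j}{j-k}\binom{j-k}{k}$. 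So the two arguments lean on two faces of the same classical fact: you quote the explicit Chebyshev closed form, while the paper quotes the inversion relation that encodes it. What your route buys: no inversion machinery is needed, and you treat explicitly the degenerate corner ($n$ even, $i=j=\on$), where the closed form reads $\frac{0}{0}\binom{0}{0}$ and must be assigned its true value $1$ --- a convention that the paper's proof, and indeed the statement of the corollary itself, passes over silently (the paper's subsequent remark, $C(n,i)=(-1)^i\left[\binom{n-i}{i}+\binom{n-i-1}{i-1}\right]$, resolves it only implicitly, given the convention $\binom{n}{i}=0$ unless $0\le i\le n$). What the paper's route buys: it reuses the matrix $M(\S,\B)$ already in hand plus a single standard reference, avoiding both the verification of the Dickson closed form and the bookkeeping at the bottom of the three-term recurrence ($t(0,0)=2$ versus $S_0=1$) that you rightly flag as the delicate point of any recursive alternative.
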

\begin{proof}
Recall the classical Chebyshev inversion relation
\begin{equation}\label{Cir}
x_n=\sum_{i=0}^{\on} \binom{n}{i}y_{n-2i}\Longleftrightarrow y_n=\sum_{i=0}^{\on} C(n,i) x_{n-2i},
\end{equation}
where $$C(n,i)=(-1)^i \frac{n}{n-i}\binom{n-i}{i}$$ for $0\le i\le \on$
(see Riordan~\cite[p.54]{Rio68} for instance).
Now
$$B_n(q)=\sum_{i=0}^{\on}\binom{n}{i}q^iS_{n-2i}(q),$$
which can be rewritten as
$$q^{-n/2}B_n(q)=\sum_{i=0}^{\on}\binom{n}{i}q^{-(n-2i)/2}S_{n-2i}(q).$$
So by (\ref{Cir}),
$$q^{-n/2}S_n(q)=\sum_{i=0}^{\on}\binom{n}{i}q^{-(n-2i)/2}B_{n-2i}(q),$$
i.e.,
$$S_n(q)=\sum_{i=0}^{\on}C(n,i) q^iB_{n-2i}(q).$$
Thus by Theorem~\ref{trans-matrix},  $c_{i,j}=0$ for $i<j$ and
$$c_{i,j}=C(n-2j,i-j)=(-1)^{i-j}\frac{n-2j}{n-i-j}\binom{n-i-j}{i-j}$$ for $0\le j\le i\le \on$.
This completes our proof.
\end{proof}

\begin{rem}\rm
Note that $$C(n,i)=(-1)^{i}\left[\binom{n-i}{i}+\binom{n-i-1}{i-1}\right].$$
Hence all $C(n,i)$ are integers, and so are all $c_{i,j}$.
Thus $M(\B,\S)$ is an integer matrix.
\end{rem}

\section{$\La$-polynomials}
\hspace*{\parindent}

Let $\F$ be a basis of $\cp_n(q)$ and $f(q)\in\cp_n(q)$.
We say that $f(q)$ is {\it $\F$-positive} if $f(q)$ can be expressed in terms of the basis $\F$ with nonnegative coefficients.
Clearly, the set of all $\F$-positive polynomials forms a convex cone of $\cp_n(q)$.
Our concern in this section is $\A$-positive polynomials and $\B$-positive polynomials.
A $\B$-positive polynomial is usually said to have nonnegative $\gamma$-vector
and a $\A$-positive polynomial is precisely a $\La$-polynomial.

\begin{thm}\label{iff}
Let $f(q)$ be a palindromic polynomial of nonnegative coefficients.
Then $f(q)$ is $\A$-positive if and only if $f(q)$ is unimodal,
i.e., $f(q)$ is a $\La$-polynomial.
\end{thm}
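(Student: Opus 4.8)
I need to show that for a palindromic polynomial with nonnegative coefficients, being $\A$-positive (expressible with nonnegative coefficients in the basis $\A = \{q^j A_{n-2j}(q)\}$) is equivalent to being unimodal. Let me first understand both directions.

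The $\A$-positive direction: if $f(q) = \sum_{j=0}^{\on} c_j q^j A_{n-2j}(q)$ with all $c_j \geq 0$, I want to show $f$ is unimodal. The key point is that each basis element $q^j A_{n-2j}(q) = q^j(1 + q + \cdots + q^{n-2j})$ is itself a $\Lambda$-polynomial (palindromic and unimodal, with nonnegative coefficients) of darga $n$. So $f$ is a nonnegative linear combination of $\Lambda$-polynomials of the same darga.

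The converse: if $f$ is unimodal and palindromic with nonnegative coefficients, I want the coefficients $c_j$ in the $\A$-expansion to be nonnegative.

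**Plan for the converse.** Using $M(\A,\S)$, the transition matrix, I can compute the coefficients explicitly. Write $f(q) = \sum_{j=0}^{\on} a_j q^j S_{n-2j}(q)$ in the standard basis (so $a_j$ are essentially the lower-half coefficients of $f$). The matrix $M(\A,\S)$ computed in the excerpt is bidiagonal with $1$'s on the diagonal and $-1$'s on the subdiagonal. So if $f = \sum c_j q^j A_{n-2j}(q)$, then the $\S$-coordinates $(a_j)$ and the $\A$-coordinates $(c_j)$ are related by $c_j = a_j - a_{j-1}$ (reading off the action of $M(\A,\S)$, with $a_{-1}=0$). Here $a_j$ is the coefficient of $q^j$ in $f$ for $j \le \on$. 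Therefore $c_j \geq 0$ for all $j$ exactly when $a_0 \le a_1 \le \cdots \le a_{\on}$, i.e., when the coefficients of $f$ are nondecreasing up to the middle. Since $f$ is palindromic, this nondecreasing-then-nonincreasing condition is precisely unimodality.

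**Key steps in order.** First I would verify that each $q^j A_{n-2j}(q)$ is a $\Lambda$-polynomial and invoke that a nonnegative sum of $\Lambda$-polynomials of equal darga is again a $\Lambda$-polynomial (this is the easy ``if'' direction; closure under nonnegative combination follows from Corollary~\ref{lv} for palindromy, and unimodality is preserved because the coefficientwise sum of nondecreasing-then-nonincreasing sequences with a common center is again unimodal). Second, for the ``only if'' direction, I would expand $f$ in the standard basis $\S$ to read off $a_0,\dots,a_{\on}$, then apply $M(\A,\S)$ to get $c_j = a_j - a_{j-1}$. Third, I would observe that unimodality of a palindromic polynomial is equivalent to $a_0 \le a_1 \le \cdots \le a_{\on}$, which is equivalent to all $c_j \ge 0$.

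**Anticipated obstacle.** The main subtlety is bookkeeping about what the $\S$-coordinates $a_j$ actually are and handling the middle term carefully when $n$ is even versus odd (the center behaves slightly differently, since $S_0(q)=1$ rather than $1+q^0=2$). I would need to check that the identification ``$a_j$ = coefficient of $q^j$ in $f$'' holds for $j < n/2$ and treat the central coefficient with care so that the telescoping $c_j = a_j - a_{j-1}$ is valid all the way up to $j=\on$. Once the coordinates are pinned down correctly, the equivalence is a clean one-line consequence of the bidiagonal form of $M(\A,\S)$; the real work is entirely in that indexing.
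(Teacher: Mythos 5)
Your proposal is correct and follows essentially the same route as the paper: the paper's proof is exactly your ``only if'' argument, writing $f(q)=\sum_j a_jq^j(1+q^{n-2j})=\sum_j c_jq^j(1+q+\cdots+q^{n-2j})$ and reading off $c_0=a_0$, $c_j=a_j-a_{j-1}$ from the bidiagonal matrix $M(\A,\S)$, which yields both directions at once (so your separate closure argument for the ``if'' direction is redundant but harmless). The indexing subtlety you worried about resolves cleanly, since $S_0(q)=1$ and $A_0(q)=1$ make $a_j$ equal to the coefficient of $q^j$ in $f$ for every $j\le\on$, including the central one.
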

\begin{proof}
Let $f(q)$ be a palindromic polynomial of darga $n$. Assume that
$$f(q)=\sum_{j=0}^{\on}a_jq^j(1+q^{n-2j})=\sum_{j=0}^{\on}c_jq^j(1+q+\cdots+q^{n-2j}).$$
Then $c_0=a_0$ and $c_j=a_{j}-a_{j-1}$ for $1\le j\le \on$ by the transition matrix $M(\A,\S)$.
Thus $c_j$ is nonnegative if and only if $a_j$ is nondecreasing for $0\le j\le \on$.
In other words, $f(q)$ is $\A$-positive if and only if $f(q)$ is unimodal.
\end{proof}

The following result is an immediate consequence of Corollary~\ref{B2S}

\begin{thm}\label{thm-t31}
Let $f(q)=\sum_{j=0}^{n} a_jq^j$ be a palindromic polynomial of darga $n$ and
$$f(q)=\sum_{i=0}^{\on}b_iq^i(1+q)^{n-2i}.$$
\begin{itemize}
  \item [\rm (i)] For $0\le i\le \on$, we have $$b_i=\sum_{j=0}^{i}(-1)^{i-j}\frac{n-2j}{n-i-j}\binom{n-i-j}{i-j}a_j.$$
  \item [\rm (ii)] If all $a_j$ are integers, then so are all $b_i$.
  \item [\rm (iii)] A palindromic polynomial $f(q)=\sum_{j=0}^{n} a_jq^j$ is $\B$-positive if and only if
  $$\sum_{j=0}^{i}(-1)^{i-j}\frac{n-2j}{n-i-j}\binom{n-i-j}{i-j}a_j>0$$ for all $0\le i\le \on$.
\end{itemize}
\end{thm}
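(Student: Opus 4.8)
The plan is to read Theorem~\ref{thm-t31} off directly from the transition matrix $M(\B,\S)$ computed in Corollary~\ref{B2S}, so the only genuine work is setting up the change of coordinates correctly. First I would record the coordinates of $f$ in the standard basis $\S$. Since $f$ is palindromic of darga $n$ we have $a_j=a_{n-j}$, and pairing $a_jq^j$ with $a_{n-j}q^{n-j}$ gives
\[
f(q)=\sum_{j=0}^{\on}a_jq^j(1+q^{n-2j})=\sum_{j=0}^{\on}a_jq^jS_{n-2j}(q),
\]
so the coordinate vector of $f$ with respect to $\S$ is exactly $(a_0,a_1,\ldots,a_{\on})$. (One must check the bookkeeping of the middle term when $n$ is even, where $S_0=1$ contributes $a_{n/2}q^{n/2}$ only once.)

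Next, for part (i), I would invoke the meaning of the transition matrix from Theorem~\ref{trans-matrix}: the columns of $M(\B,\S)=(c_{i,j})$ express the $\S$-basis vectors $q^jS_{n-2j}$ in $\B$-coordinates, so passing from $\S$-coordinates to $\B$-coordinates is multiplication by $M(\B,\S)$. Applying this to $f$ gives $b_i=\sum_{j=0}^{\on}c_{i,j}a_j$. Because $c_{i,j}=0$ for $i<j$ and $c_{i,j}=(-1)^{i-j}\frac{n-2j}{n-i-j}\binom{n-i-j}{i-j}$ for $j\le i$ by Corollary~\ref{B2S}, the sum truncates at $j=i$ and yields the stated formula for $b_i$.

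Part (ii) then follows from the Remark after Corollary~\ref{B2S}: each $C(n,i)$, and hence each entry $c_{i,j}=C(n-2j,\,i-j)=(-1)^{i-j}\big[\binom{n-i-j}{i-j}+\binom{n-i-j-1}{i-j-1}\big]$, is an integer, so $b_i$ is an integer combination of the integers $a_j$. For part (iii), since $\B$ is a basis the coefficients $b_i$ are uniquely determined by $f$, so $f$ is $\B$-positive exactly when every $b_i$ is nonnegative (as required by $\B$-positivity), and substituting the formula from (i) gives the displayed inequalities.

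The result is essentially a substitution, so there is no deep obstacle; the one place to be careful is the \emph{direction} of the change of basis. I must be sure to multiply $[f]_\S$ by $M(\B,\S)$ itself (whose columns are the $\S$-vectors written in the $\B$-basis) rather than by its inverse, and that the truncation of the sum to $0\le j\le i$ is exactly the lower-triangularity $c_{i,j}=0$ for $i<j$. Getting this orientation right, together with the identification of $(a_0,\ldots,a_{\on})$ as the $\S$-coordinates of $f$, is all the proof requires.
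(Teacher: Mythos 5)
Your proposal is correct and follows exactly the paper's own route: the paper derives Theorem~\ref{thm-t31} as an immediate consequence of Corollary~\ref{B2S}, i.e., by applying the transition matrix $M(\B,\S)$ to the $\S$-coordinates $(a_0,\ldots,a_{\on})$ of $f$, with integrality coming from the remark that all $C(n,i)$ are integers, just as you do. The only discrepancy is in the paper's statement itself, not your argument: $\B$-positivity requires the $b_i$ to be nonnegative, so the strict inequality ``$>0$'' in part (iii) should read ``$\ge 0$,'' which matches what you wrote.
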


Clearly, if all elements of a certain transition matrix from the basis $\F$ to the basis $\F'$ are nonnegative,
then a $\F'$-positive polynomial must be $\F$-positive.
In particular, if $f(q)$ is $\B$-positive, then it is a $\La$-polynomial.

Let $\{f_j(q)\}_{j\ge 0}$ be a sequence of basic palindromic polynomials.
Then
$$\F_n=\left\{ q^jf_{n-2j}(q): j=0,1,\ldots, \on\right\}$$
is a basis of $\cp_n(q)$ by Theorem~\ref{sym}.
For any $i,j\ge 0$, the product $f_i(q)f_j(q)$ is a palindromic polynomial of darga $i+j$,
and so it can be expressed in terms of the basis $\F_{i+j}$.
We say that the sequence $\{f_j(q)\}_{j\ge 0}$ is {\it self-positive} if
$f_i(q)f_j(q)$ is $\F_{i+j}$-positive for any $i,j\ge 0$.

\begin{exm}\rm
Let us examine the self-positivity of three sequences of basic palindromic polynomials in Example~\ref{bsp}.
The sequences $\{S_j(q)\}$ and $\{B_j(q)\}$ are obviously self-positive.
Note that
$$A_i(q)A_j(q)=(1+q+\cdots+q^i)(1+q+\cdots+q^j)=\sum_{k=0}^{\lrf{(i+j)/2}}q^k(1+q+\cdots+q^{i+j-2k}).$$
Hence the sequence $\{A_j(q)\}$ is also self-positive.
\end{exm}

\begin{prop}\label{prop-p}
Let $\{f_j(q)\}_{j\ge 0}$ be a self-positive sequence of basic palindromic polynomials.
Suppose that $f(q)$ and $g(q)$ are $\F_n$-positive and $\F_m$-positive respectively.
Then $f(q)g(q)$ is $\F_{n+m}$-positive.
\end{prop}
\begin{proof}
By the self-positivity of $\{f_j(q)\}_{j\ge 0}$,
we have 
$$f_{n-2i}(q)f_{m-2j}(q)=\sum_{k=0}^{\lrf{(n+m-2i-2j)/2}}a_kq^kf_{n+m-2i-2j-2k}(q),\qquad a_k\ge 0.$$
Now suppose that
$$f(q)=\sum_{i=0}^{\lrf{n/2}}b_iq^if_{n-2i}(q),\qquad b_i\ge 0$$ and
$$g(q)=\sum_{j=0}^{\lrf{m/2}}c_jq^jf_{m-2j}(q),\qquad c_j\ge 0.$$ Then
\begin{eqnarray*}
  f(q)g(q) 
   &=& \sum_{i=0}^{\lrf{n/2}}\sum_{j=0}^{\lrf{m/2}}\sum_{k=0}^{\lrf{(n+m-2i-2j)/2}}a_kb_ic_jq^{i+j+k}f_{n+m-2i-2j-2k}(q)\\
   &=& \sum_{i=0}^{\lrf{n/2}}\sum_{j=0}^{\lrf{m/2}}\sum_{r=i+j}^{\lrf{(n+m)/2}}a_{r-i-j}b_ic_jq^{r}f_{n+m-2r}(q).
\end{eqnarray*}
Thus $f(q)g(q)$ is $\F_{n+m}$-positive,
and the proof is therefore complete.
\end{proof}

In particular, taking $f_j(q)=A_j(q)$ in Proposition~\ref{prop-p}, we obtain a result due to Andrews~\cite[Theorem 1]{And75}.

\begin{coro}\label{p-fg}
The product of $\La$-polynomials is a $\La$-polynomial.
\end{coro}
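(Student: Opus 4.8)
The plan is to recognize that Corollary~\ref{p-fg} is exactly the specialization of Proposition~\ref{prop-p} to the sequence $\{A_j(q)\}_{j\ge 0}$ of Example~\ref{bsp}(ii), so that almost all of the work has already been done. The first step is to restate what a $\La$-polynomial means in the language of $\A$-positivity. By Theorem~\ref{iff}, a palindromic polynomial with nonnegative coefficients is $\A$-positive precisely when it is unimodal, that is, when it is a $\La$-polynomial. Conversely, any $\A$-positive polynomial automatically has nonnegative coefficients (each basis element $q^jA_{n-2j}(q)$ does) and is palindromic of the prescribed darga. Thus, for palindromic polynomials, ``$\La$-polynomial'' and ``$\A$-positive'' name the same property, and the corollary is equivalent to the assertion that the product of an $\A$-positive polynomial of darga $n$ with an $\A$-positive polynomial of darga $m$ is $\A$-positive of darga $n+m$.

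To invoke Proposition~\ref{prop-p}, I would next confirm that $\{A_j(q)\}_{j\ge 0}$ is self-positive, which is the one computational ingredient. This is the product identity
$$A_i(q)A_j(q)=\sum_{k=0}^{\lrf{(i+j)/2}}q^kA_{i+j-2k}(q),$$
recorded just above the statement: expanding $(1+q+\cdots+q^i)(1+q+\cdots+q^j)$ yields a trapezoidal coefficient sequence that peels off into symmetric staircase layers, each carrying coefficient $1\ge 0$. Since all coefficients in this expansion are nonnegative, the sequence $\{A_j(q)\}$ meets the self-positivity hypothesis of Proposition~\ref{prop-p}.

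With these two observations in hand, the conclusion is immediate. Given $\La$-polynomials $f(q)$ and $g(q)$ of darga $n$ and $m$, they are $\A$-positive by the first step, and Proposition~\ref{prop-p} (taking $f_j=A_j$) gives that $f(q)g(q)$ is $\F_{n+m}$-positive, i.e. $\A$-positive of darga $n+m$. Translating back through Theorem~\ref{iff}, $f(q)g(q)$ is then a palindromic, unimodal polynomial with nonnegative coefficients, hence a $\La$-polynomial.

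The only genuine obstacle here is bookkeeping rather than mathematics: one must ensure that the darga conventions align so that the product lands in $\cp_{n+m}(q)$ (guaranteed by Corollary~\ref{fg}(i)), and that the equivalence between $\La$-polynomials and $\A$-positive polynomials is applied in both directions. Once the self-positivity identity for $\{A_j(q)\}$ is granted, no further difficulty remains.
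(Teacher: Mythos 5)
Your proposal is correct and follows exactly the paper's own route: the paper proves Corollary~\ref{p-fg} by specializing Proposition~\ref{prop-p} to $f_j(q)=A_j(q)$, relying on the self-positivity identity for $\{A_j(q)\}$ and the equivalence of ``$\La$-polynomial'' with ``$\A$-positive'' from Theorem~\ref{iff}. You have merely made explicit the bookkeeping (both directions of Theorem~\ref{iff} and the darga count via Corollary~\ref{fg}(i)) that the paper leaves implicit.
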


\begin{rem}\label{prod-3}\rm
Let $f(q)=\sum_{i=0}^{\lrf{n/2}}a_iq^i(1+q)^{n-2i}$ and $g(q)=\sum_{j=0}^{\lrf{m/2}}b_jq^j(1+q)^{m-2j}$.
Then $$f(q)g(q)=\sum_{k=0}^{\lrf{(n+m)/2}}c_kq^k(1+q)^{n+m-2k},$$
where $c_k=\sum_{i+j=k}a_ib_j$.
In other words, $\{c_k\}$ is the convolution of $\{a_i\}$ and $\{b_j\}$.
This result also holds for the product of arbitrary finite such polynomials.
\end{rem}

A concept closely related to unimodality is log-concavity.
Let $\{a_i\}_{i=0}^n$ be a sequence of nonnegative numbers with no internal zeros, i.e.,
there are no three indices $i<j<k$ such that $a_i,a_k\neq 0$ and $a_j=0$.
It is {\it log-concave} if $a_{i-1}a_{i+1}\le a_i^2$ for $0<i<n$.
Clearly,
the sequence is log-concave if and only if 
$a_{i-1}a_{j+1}\le a_ia_j$ for $0<i\le j<n$.
Thus a log-concave sequence is unimodal.
A basic approach for attacking unimodality and log-concavity is to utilize the Newton's inequality:
If the polynomial $f(q)=\sum_{i=0}^{n}a_iq^i$ has only real zeros, then
$$a_i^2\ge a_{i-1}a_{i+1}\left(1+\frac{1}{i}\right)\left(1+\frac{1}{n-i}\right),\qquad i=1,2,\ldots,n-1$$
(see Hardy, Littlewood and P\'olya~\cite[p. 104]{HLP52}). It follows
that if all $a_i$ are nonnegative, then $f(q)$ is log-concave and
therefore unimodal. So, if $f(q)$ is a palindromic polynomial with
nonnegative coefficients and with only real zeros, then it is  a
$\La$-polynomial. Br\"{a}nd\'{e}n further showed that $f(q)$ is
actually $\B$-positive~\cite[Lemma 4.1]{Bra04-05}. We end this
section by showing the following stronger result.

\begin{thm}\label{thm-pf}
Let $f(q)=\sum_{j=0}^{n} a_jq^j$ be a palindromic polynomial of darga $n$ and
$$f(q)=\sum_{i=0}^{\lrf{n/2}}b_iq^i(1+q)^{n-2i}.$$
If the polynomial $f(q)=\sum_{j=0}^{n} a_jq^j$ has nonnegative coefficients and has only real zeros,
then so does the polynomial $\sum_{i=0}^{\lrf{n/2}}b_iq^i$.
In particular, $f(q)$ is $\B$-positive.
\end{thm}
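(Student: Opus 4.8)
The plan is to exploit the substitution $t=q/(1+q)^2$, which linearizes the $\B$-expansion. Writing $P(t)=\sum_{i=0}^{\on}b_it^i$ for the target polynomial, the identity $q^i(1+q)^{n-2i}=(1+q)^n\bigl(q/(1+q)^2\bigr)^i$ gives at once the rational-function identity
$$f(q)=(1+q)^n\,P\!\left(\frac{q}{(1+q)^2}\right).$$
Before using it, I would dispose of a possible zero at the origin: if $f(0)=0$, then since $f$ is palindromic of darga $n$ we may write $f(q)=q^r\tilde f(q)$ with $\tilde f$ palindromic of darga $n-2r$, $\tilde f(0)\neq0$, still with nonnegative coefficients and only real zeros; a direct comparison of the two $\B$-expansions shows $P_f(t)=t^rP_{\tilde f}(t)$, so the statement for $f$ follows from that for $\tilde f$. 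Thus I may assume $f(0)\neq0$, whence $\deg f=n$ and, because $f$ has nonnegative coefficients and only real zeros, every zero of $f$ is a strictly negative real number.

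Next I would analyze the quadratic map $q\mapsto t=q/(1+q)^2$. For fixed $t$ the equation $tq^2+(2t-1)q+t=0$ has two roots whose product is $1$, so the fibre over $t$ is a reciprocal pair $\{q_0,1/q_0\}$; this is exactly compatible with the fact (quoted earlier) that the nonzero zeros of a palindromic polynomial occur in reciprocal pairs of equal multiplicity, and the self-reciprocal value $q=-1$ is the pole of the substitution. I would split the zeros of $f$ as: the zero $-1$ with some multiplicity $e$, and reciprocal pairs $\{q_j,1/q_j\}$ with $q_j<-1$ of multiplicity $\mu_j$, so that $n=e+2\sum_j\mu_j$. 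Since $t'(q)=(1-q)/(1+q)^3$ is nonzero at each $q_j$, the map is a local diffeomorphism there, so each pair contributes a zero of $P$ at $t_j=q_j/(1+q_j)^2<0$ of multiplicity exactly $\mu_j$, and distinct pairs give distinct $t_j$ because the fibres are precisely the reciprocal pairs. Hence $P$ has at least $\sum_j\mu_j$ real (indeed negative) zeros counted with multiplicity.

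The crux is then a degree count matching this up exactly. From the $\B$-expansion $f(q)=\sum_{i=0}^{\deg P}b_iq^i(1+q)^{n-2i}$ with $b_{\deg P}\neq0$, the lowest power of $(1+q)$ occurs uniquely in the top term, so the order of vanishing of $f$ at $q=-1$ is exactly $n-2\deg P$; equating this with $e$ gives $\deg P=(n-e)/2=\sum_j\mu_j$. Therefore the negative zeros $t_j$ already exhaust all zeros of $P$, so $P$ is real-rooted with only negative zeros. Since the leading coefficient of $f$ equals $b_0>0$ and $P(0)=b_0$, the leading coefficient of $P$ is forced positive, and a polynomial with only negative zeros and positive leading coefficient has all coefficients positive; thus all $b_i\ge0$ and $f$ is $\B$-positive. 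I expect the main obstacle to be precisely this bookkeeping at $q=-1$: one must verify, via the order-of-vanishing computation, that the reciprocal pairs account for \emph{every} zero of $P$ and not merely a subset, since the substitution is two-to-one and singular exactly at the self-reciprocal point.
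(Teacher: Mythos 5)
Your proof is correct, but it takes a genuinely different route from the paper's. The paper argues constructively: after reducing to the case $q\nmid f$ and $(1+q)\nmid f$, it invokes Proposition~\ref{factor} to factor $f(q)=a\prod_k(q+r_k)(q+1/r_k)$ with $a>0$, $r_k>0$, $r_k\neq 1$, rewrites each reciprocal quadratic as $(1+q)^2+s_kq$ with $s_k=r_k+1/r_k-2>0$, and then applies the convolution rule of Remark~\ref{prod-3} to obtain the explicit product formula $\sum_i b_iq^i=a\prod_k(1+s_kq)$, from which nonnegativity and real-rootedness are read off at once. You instead factor out only the power of $q$, keep $(1+q)^e$ inside $f$, pass to the rational identity $f(q)=(1+q)^n P\bigl(q/(1+q)^2\bigr)$, and run a zero-counting argument: each reciprocal pair of negative zeros of $f$ yields a negative zero of $P$ of the correct multiplicity (since the substitution is a local diffeomorphism away from $q=-1$), and the order of vanishing of $f$ at $q=-1$ pins down $\deg P=(n-e)/2$ exactly, so these zeros exhaust all zeros of $P$; positivity then follows from $P(0)=b_0=a_n>0$. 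The two arguments in fact locate the same zeros of $P$: the zero coming from the paper's pair $\{-r_k,-1/r_k\}$ is $-1/s_k=-r_k/(1-r_k)^2$, which is precisely your $t_k$. What each buys: the paper's route yields an explicit formula for the $b_i$ (up to the factor $a$, they are elementary symmetric functions of the $s_k$), at the cost of leaning on two earlier results of the paper (Proposition~\ref{factor} and Remark~\ref{prod-3}); your route is self-contained apart from the reciprocal-pair structure of the zeros of a palindromic polynomial, at the cost of the analytic bookkeeping (multiplicities under a two-to-one map, the order-of-vanishing computation at $q=-1$) that you correctly identified as the crux, and all of which you carried out accurately.
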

\begin{proof}
We first consider the case that $q\nmid f(q)$ and $(1+q)\nmid f(q)$.
Now $f(q)$ is palindromic and has only real zeros,
so by Proposition~\ref{factor}, $n$ is even and
$$f(q)=a\prod_{k=0}^{n/2}(q+r_k)(q+1/r_k),$$
where $a>0, r_k>0$ and $r_k\neq 1$. It follows that
$$f(q)=a\prod_{k=0}^{n/2}\left[(1+q)^2+s_kq\right],$$
where $s_k=r_k+1/r_k-2>0$.
By Remark~\ref{prod-3}, we have
$$\sum_{i=0}^{n/2}b_iq^i=a\prod_{k=0}^{n/2}(1+s_kq),$$
which has nonnegative coefficients and has only real zeros.

For the general case, let $f(q)=q^r(1+q)^eg(q)$, where $q\nmid g(q)$ and $(1+q)\nmid g(q)$.
Then $g(q)$ is also a palindromic polynomial with nonnegative coefficients and with only real zeros.
By the discussion above, we have $g(q)=\sum_{i=0}^{\lrf{m/2}}c_iq^i(1+q)^{m-2i}$,
where $m=n-2r-e$ and $\sum_{i=0}^{\lrf{m/2}}c_iq^i$ is a polynomial with nonnegative coefficients and with only real zeros.
It follows that $$f(q)=\sum_{i=0}^{\lrf{m/2}}c_iq^{r+i}(1+q)^{m+e-2i}=\sum_{j=r}^{\lrf{(m+2r)/2}}c_{j-r}q^j(1+q)^{n-2j},$$
which implies that $b_j=c_{j-r}$ if $r\le j\le \lrf{(m+2r)/2}$ and $b_j=0$ otherwise.
So
$$\sum_{j=0}^{\lrf{n/2}}b_jq^j=q^r\sum_{i=0}^{\lrf{m/2}}c_iq^i,$$
which is a polynomial with nonnegative coefficients and with only real zeros, as required.
This completes our proof.
\end{proof}
\begin{rem}
It is well known that Eulerian polynomials, Narayana polynomials and derangement polynomials are palindromic and
have only real zeros (see, e.g., \cite{LW07}).
So these polynomials are all $\B$-positive.
See \cite{Sun13,SW14} for combinatorial interpretations to express these polynomials in terms of the basis $\B$ with nonnegative coefficients.
\end{rem}

\section{Remarks}
\hspace*{\parindent}

An important resource for $\La$-polynomials is rank-generating functions of posets.
We follow Engel~\cite{Eng97} 
for poset notation and terminology.

Let $P$ be a finite graded poset of rank $n$
and $F(P;q)$ be the rank-generating function of $P$.
Let $\cc_n$ and $\cb_n$ be the chain of length $n$ and the Boolean algebra on $n$ elements respectively.
Then $F(\cc_n;q)=1+q+q^2+\cdots+q^n$ and $F(\cb_n;q)=(1+q)^n$.
It is well known that if $P$ has a symmetric chain decomposition, then
$$F(P;q)=\sum_{j=0}^{\lrf{n/2}} c_jq^j(1+q+\cdots+q^{n-2j}),$$
where $c_j$ is the number of symmetric chains of length $n-2j$ in the symmetric chain decomposition of $P$.
Similarly, if $P$ has a symmetric Boolean decomposition (see~\cite{Pet11} for details), then
$$F(P;q)=\sum_{j=0}^{\lrf{n/2}}b_jq^j(1+q)^{n-2j},$$
where $b_j$ is the number of symmetric Boolean algebras $\cb_{n-2j}$ in the decomposition of $P$.

More generally, let $P$ be a Peck poset.
Then $F(P;q)$ is a $\La$-polynomial.
Two particular interesting examples of Peck posets are Young lattices $L(m,n)$ and $M(n)$,
whose rank-generating functions are the Gaussian polynomial
$$F(L(m,n);q)=\binom{m+n}{n}_q$$
and the partition polynomial
$$F(M(n);q)=(1+q)(1+q^2)\cdots (1+q^n)$$
respectively.
Stanley~\cite{Sta80b} showed, among other things,
that $L(m,n)$ and $M(n)$ are Peck posets using the hard Lefschetz theorem of algebraic geometry
(see \cite{Pro82} for a linear algebra approach).
This implies that the Gaussian polynomial and the partition polynomial are $\La$-polynomials.
O'Hara~\cite{Koh90} gave a constructive proof for the unimodality of the Gaussian polynomial.
However, there is no combinatorial interpretation for the unimodality of the partition polynomial $(1+q)(1+q^2)\cdots (1+q^n)$.

On the other hand, Odlyzko and Richmond~\cite{OR82} developed analytic techniques
to study the unimodality of polynomials $(1+q^{a_1})\cdots(1+q^{a_n})$,
and in particular, showed the unimodality of $(1+q)(1+q^2)\cdots (1+q^n)$.
Almkvist~\cite{Alm89} further proposed the following.

\begin{conj}
The polynomial
$$f_{n,r}(q)=\prod_{i=1}^n\frac{1-q^{ri}}{1-q^i}$$
is a $\La$-polynomial if either $r$ is even and $n\ge 1$ or $r$ is odd and $n\ge 11$.
\end{conj}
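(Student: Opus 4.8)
The plan is to decouple the two ingredients of a $\La$-polynomial. Palindromicity is the easy half: each factor $\frac{1-q^{ri}}{1-q^i}=1+q^i+q^{2i}+\cdots+q^{(r-1)i}$ is palindromic of darga $(r-1)i$, so by Corollary~\ref{fg}(i) the product $f_{n,r}(q)$ is palindromic of darga $N=(r-1)\binom{n+1}{2}$. By Theorem~\ref{iff} it then suffices to prove that $f_{n,r}$ is $\A$-positive, i.e.\ that in $f_{n,r}(q)=\sum_{j=0}^{\lrf{N/2}}c_jq^jA_{N-2j}(q)$ every $c_j\ge 0$, equivalently that its coefficient sequence increases up to the center. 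The reason this is hard is that the soft tools developed above are unavailable: the factors $1+q^i+\cdots+q^{(r-1)i}$ have internal zeros as soon as $i\ge 2$, so they are \emph{not} unimodal and Corollary~\ref{p-fg} cannot be applied factor by factor; and the zeros of $f_{n,r}$ are roots of unity lying off the real axis, so $f_{n,r}$ is not real-rooted and Theorem~\ref{thm-pf} says nothing. The unimodality here is thus of the genuinely difficult, non-structural kind.

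I would first pursue the algebraic route that underlies the Peck-poset discussion of Section~4. Give $x_1,\ldots,x_n$ the weighted degrees $\deg x_i=i$ and set $A_{n,r}=K[x_1,\ldots,x_n]/(x_1^r,\ldots,x_n^r)$ over a field $K$ of characteristic $0$. Since $K[x_1,\ldots,x_n]$ has Hilbert series $\prod_{i=1}^n(1-q^i)^{-1}$ and $x_1^r,\ldots,x_n^r$ is a regular sequence of degrees $ri$, the Artinian Gorenstein algebra $A_{n,r}$ has Hilbert series exactly $\prod_{i=1}^n\frac{1-q^{ri}}{1-q^i}=f_{n,r}(q)$; its monomial basis is indexed by $(a_1,\ldots,a_n)$ with $0\le a_i\le r-1$ and graded by the weighted rank $\sum_i i\,a_i$, matching the product-of-chains poset. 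If $A_{n,r}$ enjoyed a strong Lefschetz property, the Hilbert function would be symmetric and unimodal and the conjecture would follow, the case $r=2$ recovering Stanley's theorem that $M(n)$ is Peck~\cite{Sta80b,Pro82}. The crux is that the grading is \emph{weighted}: the degree-one piece is spanned by $x_1$ alone, so the classical ``generic linear form'' argument that establishes the Lefschetz property for standard-graded monomial complete intersections in characteristic $0$ does not transfer, and one must instead produce a Lefschetz element adapted to the weighting---precisely the step that resists a uniform treatment and that must genuinely fail for the excluded small odd cases.

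Because the conjecture fails for small odd $r$ and small $n$---the very source of the hypothesis $n\ge 11$---no purely structural argument can succeed unmodified, and I expect the decisive input to be analytic, in the spirit of Odlyzko and Richmond~\cite{OR82}, who settled the $r=2$ polynomial $(1+q)(1+q^2)\cdots(1+q^n)$. Writing $f_{n,r}(q)=\sum_k a_kq^k$, the target is the sharp first-difference estimate $a_k\ge a_{k-1}$ for $1\le k\le N/2$, to be extracted from saddle-point asymptotics for the coefficients with error terms made effective enough to hold for all $n$ beyond an explicit bound, the finitely many remaining cases being verified by direct computation. The main obstacle is exactly this effectivity: one must control the coefficients of a product of $n$ cyclotomic-type factors uniformly near the center of the support and show that the positivity of the first difference switches on precisely at $n=11$ in the odd-$r$ regime while holding for every $n\ge 1$ in the even-$r$ regime. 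Reconciling a clean Lefschetz-type proof for even $r$ with an unavoidably quantitative argument for odd $r$ is, I expect, the heart of the whole problem.
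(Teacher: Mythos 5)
This statement is Almkvist's conjecture, and the paper offers no proof of it: Section~4 explicitly records that Almkvist verified it for $n=3,4,\ldots,20,100,101$ and that ``the general case is still open.'' So there is no paper proof to compare against, and the real question is whether your text constitutes a proof. It does not. What you actually establish is only the easy half: each factor $1+q^i+q^{2i}+\cdots+q^{(r-1)i}$ is palindromic of darga $(r-1)i$, so by Corollary~\ref{fg}(i) the product is palindromic of darga $(r-1)\binom{n+1}{2}$, and by Theorem~\ref{iff} the conjecture reduces to unimodality (equivalently $\A$-positivity). That reduction is correct, as is your diagnosis of why the paper's tools fail here: the individual factors have internal zeros for $i\ge 2$, so Corollary~\ref{p-fg} cannot be applied factor by factor, and $f_{n,r}$ is not real-rooted, so Theorem~\ref{thm-pf} is unavailable. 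But from that point on your text is a research programme, not an argument.

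Concretely, both of your proposed routes are left entirely unexecuted at exactly the step that matters. The Lefschetz route sets up the correct Artinian Gorenstein algebra with Hilbert series $f_{n,r}(q)$, but you yourself note that the weighted grading breaks the generic-linear-form argument and that a weighted Lefschetz element ``resists a uniform treatment''---no such element is produced, and indeed none can exist in the excluded cases (e.g.\ $f_{2,3}(q)=1+q+2q^2+q^3+2q^4+q^5+q^6$ is not unimodal), so any successful variant must somehow see the threshold $n\ge 11$, which your sketch does not. The analytic route likewise names the target inequality $a_k\ge a_{k-1}$ for $k\le N/2$ and the need for effective saddle-point estimates, but supplies no estimate, no error bound, and no mechanism by which positivity ``switches on'' at $n=11$ for odd $r$. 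Identifying the obstacles honestly is valuable, but the conjectural statement remains exactly as open at the end of your proposal as at the beginning; nothing beyond palindromicity has been proved.
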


Almkvist also verified the conjecture for $n=3,4,\ldots,20,100,101$ using Odlyzko and Richmond's techniques.
But the general case is still open.

It is interesting to find out a combinatorial interpretation of the unimodality of $(1+q)(1+q^2)\cdots (1+q^n)$
and how to express $(1+q)(1+q^2)\cdots (1+q^n)$ in terms of the basis $\A$ with nonnegative coefficients.

\small

\end{document}